\numberwithin{equation}{section}
\numberwithin{figure}{section}
\theoremstyle{plain}
\newtheorem{thm}{Theorem}[section]
  \theoremstyle{definition}
  \newtheorem{defn}[thm]{Definition}
  \theoremstyle{plain}
  \newtheorem{lem}[thm]{Lemma}
\numberwithin{equation}{section}
\date{\today}%\author{Yonatan Gutman}
\newtheorem{lemma}{Lemma}[section]\newtheorem{theorem}[lemma]{Theorem}\theoremstyle{theoreml}\theoremstyle{proposition}\theoremstyle{corollary}\theoremstyle{definition}\newtheorem{definition}[lemma]{Definition}\newtheorem{remark}[lemma]{Remark}\makeatletter\makeatother\makeatletter\makeatother
\newcommand{\beq}{\begin{equation*}}\newcommand{\eeq}{\end{equation*}}\newcommand{\be}{\begin{equation}}\newcommand{\ee}{\end{equation}}\newcommand{\bp}{\begin{proof}}\newcommand{\ep}{\end{proof}}\newcommand{\bd}{\begin{definition}}\newcommand{\ed}{\end{definition}}\newcommand{\bt}{\begin{theorem}}\newcommand{\et}{\end{theorem}}%\newcommand{\bco}{\begin{comment}}
\newcommand{\Z}{\mathbb{Z}}\newcommand{\B}{\mathscr{B}}\newcommand{\ra}{\rightarrow}
\def\Aut{{\textrm{Aut}}}
\def\cB{{\mathcal B}}
\def\cc{\curvearrowright}
\def\E{{\mathbb E}}
\def\ed{\textrm{End}}
\def\Ex{{\rm{Ex}}}
\def\cF{{\mathcal F}}
\def\oF{{\overline{F}}}
\def\G{{\rm{G}}}
\def\cP{{\mathcal P}}
\def\cQ{{\mathcal{Q}}}
\def\cR{{\mathcal{R}}}
\def\bt{{\bf t}}
\def\chix{{\raise.5ex\hbox{$\chi$}}}
\def\Z{{\mathbb Z}}
\begin{document}

\author{Lewis Bowen \& Yonatan Gutman}

\title{A Juzvinski{\u{\i}} Addition Theorem for Finitely Generated Free Groups Actions}
\subjclass[2010]{37A35, 20E05.}
\begin{abstract}
The classical \textit{Juzvinski{\u{\i}} Addition Theorem} states
that the entropy of an automorphism of a compact group decomposes
along invariant subgroups. Thomas generalized the theorem to a skew-product
setting. Using L. Bowen's \textit{f-invariant }we prove the addition
theorem for actions of finitely generated free groups on skew-products
with compact totally disconnected groups or compact Lie groups (correcting an error from \cite{Bo10c})
and discuss examples. %This result is used to cast light
%on an example of Ornstein and Weiss in \cite{OW87}.

\end{abstract}

\keywords{Juzvinskii Addition Theorem, f-entropy, Rokhlin-Abramov Addition Formula, finitely generated free groups.}

\maketitle

\tableofcontents{}

\section{Introduction}

The following result was proven independently by H. Li \cite{Li11}
and Lind-Schmidt \cite{LS09}.
\begin{thm}
\label{thm:amenable}{[}Addition theorem for amenable groups{]} Let
$\Gamma$ be a countable discrete amenable group, $G$ be a compact
metrizable group and $\alpha:\Gamma\to\Aut(G)$ an action of $\Gamma$
on $G$ by group-automorphisms. Suppose $N\lhd G$ is a closed normal
$\alpha(\Gamma)$-invariant subgroup. Denote by $\alpha_{N}:\Gamma\to\Aut(N)$
and $\alpha_{G/N}:\Gamma\to\Aut(G/N)$ the induced actions and by
$\mu_{G},\mu_{N},\mu_{G/N}$ the Haar probability measures on $G,N$
and $G/N$ respectively. Then the entropies of these actions satisfy:
\[
h_{\mu_{G}}(\alpha)=h_{\mu_{N}}(\alpha_{N})+h_{\mu_{G/N}}(\alpha_{G/N}).\]

\end{thm}
In the case $\Gamma=\Z$, this result is due to Juzvinski{\u{\i}}
\cite{Ju65} from which it receives its name. The case $\Gamma=\Z^{d}$
was proven in \cite{LSW90}. Special cases were obtained by Miles
\cite{Mi08} and Björklund-Miles \cite{BM09}.

The paper \cite{Bo10a} introduced a measure-conjugacy invariant,
called the {\em $f$-invariant}, for probability-measure-preserving
actions of finitely generated free groups. (Later a more general theory
of sofic entropy was introduced in \cite{Bo10b}, of which we have
little to say in the present article). In \cite{Bo10c}, a proof is
claimed that the above addition formula extends to the case when $\Gamma$
is a finitely generated free group, the entropy is replaced with the
$f$-invariant, and $G$ is either totally disconnected, a Lie group,
or a connected  abelian group (whenever the $f$-invariant
is well-defined). However, there is an error in the proof. We prove here that the statement remains correct if either $G$ is totally disconnected (and a mild additional hypothesis is satisfied) or $G$ is a  Lie group and the action is by smooth automorphisms. See the corrigendum \cite{BG12} for other corrections to \cite{Bo10c}. The main
result is Theorem \ref{thm:main} below. We also prove a skew-product
addition formula in Theorem \ref{thm:main-skew} which may be of independent
interest.

\subsection*{Organization} %\S \ref{S-classical entropy} sets notation and reviews entropy theory for $\Z$-actions;
 \S \ref{S-f-invariant} reviews the $f$-invariant and states the
main theorem; \S \ref{sec:skew} reviews skew-products and proves
Theorem \ref{thm:main} from Theorem \ref{thm:main-skew}. In \S
\ref{sec:key} and \S \ref{sec:proof} Theorem \ref{thm:main-skew}
is proven; \S \ref{sec:example} discusses examples, including the
Ornstein-Weiss example. %The appendix offers an erratum to \cite{Bo10c}.

\subsection*{Acknowledgements} We would like to thank Hanfeng Li for helpful
conversations and the Fields Institute where some of the work for
this project occurred. Y.G. would like to thank Benjamin
Weiss, Eli Glasner and Jon Aaronson for helpful discussions. L.B.
was partially supported by NSF grants DMS-0968762 and DMS-0954606.

%\section{Entropy theory for $\Z$-actions}\label{S-classical entropy}
%Let $(X,\cB_X,\nu)$ be a standard probability space and $\cP$ a countable measurable partition of $X$. The {\em Shannon-entropy} of $\cP$ is
%%$$H_\nu(\cP)\triangleq -\sum_{P \in \cP} \nu(P) \log( \nu(P))$$
%where $0\log(0)=0$ by convention. If $\cP,\cQ$ are two partitions of $X$ then their {\em join} is defined by $\cP\vee \cQ \triangleq \{ P \cap Q:~ P \in \cP, Q \in \cQ\}$.

%Let $T:X \to X$ be a measure-preserving transformation. If $H_\nu(\cP)<\infty$ then the {\em entropy rate} of $(T,\cP)$ is defined by
%$$h_\nu(T,\cP) = \lim_{n\to\infty} (2n+1)^{-1} H_\nu\left( \bigvee_{i=-n}^n T^i \cP \right).$$

%We will also need relative entropy theory.

\section{The $f$-invariant}

\label{S-f-invariant} Let $\Gamma=\langle s_{1},\ldots,s_{r}\rangle$
be a rank $r$ free group. Let $\alpha$ be a measure-preserving action
of $\Gamma$ on a standard probability space $(X,\cB_{X},\nu)$. We
consider $\alpha$ as a homomorphism from $\Gamma$ to the group of
automorphisms of $(X,\cB_{X},\nu)$ and write $\alpha_{g}$ for $\alpha(g)$
( $\forall g\in\Gamma$). Let $\cP=\{P_{1},P_{2},\ldots\}$ be a countable
partition of $X$ into measurable subsets. The Shannon-entropy of
$\cP$ is \[
H_{\nu}(\cP)\triangleq -\sum_{P\in\cP}\nu(P)\log(\nu(P)).\]
 By convention $0\log(0)\triangleq 0$. If $\cP,\cQ$ are two partitions of
$X$ then their {\em join} is defined by $\cP\vee\cQ\triangleq\{P\cap Q:~P\in\cP,Q\in\cQ\}$.
If $W\subset\Gamma$ is finite, we let $\cP^{W}\triangleq\bigvee_{w\in W}\alpha_{w}\cP$.
Note that $\alpha$ is only implicit in this notation. If $H_\nu(\cP)<\infty$
then define \begin{eqnarray*}
F_{\nu}(\alpha,\cP) & = & (1-2r)H_{\nu}(\cP)+\sum_{i=1}^{r}H_{\nu}(\cP\vee\alpha_{s_{i}}\cP)\\
f_{\nu}(\alpha,\cP) & = & \inf_{n>0}F_{\nu}\left(\alpha,\cP^{B(n)}\right)\end{eqnarray*}
 where $B(n)$ is the ball of radius $n$ centered at the identity with respect to the word metric. The partition $\cP$ is said to be {\em generating} (for the action
$\alpha$) if the smallest $\alpha(\Gamma)$-invariant $\sigma$-algebra
containing $\cP$ is $\cB_{X}$ (up to sets of measure zero). In \cite{Bo10a} it is shown that if $\cP,\cQ$ are finite-entropy generating partitions then $f_\nu(\alpha,\cP)=f_\nu(\alpha,\cQ)$. So we define the $f$-invariant of the action by $f_{\nu}(\alpha)\triangleq f_{\nu}(\alpha,\cP)$
where $\cP$ is any finite-entropy generating partition for $\alpha$.
If there does not exist a finite-entropy generating partition for
$\alpha$ then $f_{\nu}(\alpha)$ is undefined.

It will be useful to have an alternative formulation of the $f$-invariant for which we need the following definitions. For $g\in\Gamma$, let $h_{\nu}(\alpha_{g},\cP)$ denote the entropy
rate of $\cP$ with respect to the $\Z$-action generated by $\alpha_{g}$.
To be precise, \[
h_{\nu}(\alpha_{g},\cP)=\lim_{n\to\infty}\frac{1}{2n+1}H\left(\bigvee_{i=-n}^{n}\alpha_{g}^{i}\cP\right).\]
 The {\em entropy} of the action $\alpha_{g}$ is $h_{\nu}(\alpha_{g})=\sup_{\cP}h_{\nu}(\alpha_{g},\cP)$
where the supremum is over all finite measurable partitions $\cP$
of $X$. Define \begin{eqnarray*}
F_{\nu}^{*}(\alpha,\cP) & = & (1-r)H_{\nu}(\cP)+\sum_{i=1}^{r}h_{\nu}(\alpha_{s_{i}},\cP)\\
f_{\nu}^{*}(\alpha,\cP) & = & \inf_{n>0}F_{\nu}^{*}\left(\alpha,\cP^{B(n)}\right).\end{eqnarray*}

One of the main results
of \cite{Bo10c} is:
\begin{thm}
\label{thm:f*} Let $\alpha$ be a measure-preserving action of $\Gamma$
on a standard probability space $(X,\cB_{X},\nu)$. Then for any finite-entropy
generating partition $\cP$ for $\alpha$, $f_{\nu}(\alpha)=f_{\nu}^{*}(\alpha,\cP)$.
\end{thm}
The main result of this paper is:
\begin{thm}
\label{thm:main} Let $\Gamma=\langle s_{1},\ldots,s_{r}\rangle$
be a rank $r$ free group, $G$ be a compact metrizable group and
$\alpha:\Gamma\to\Aut(G)$ an action of $\Gamma$ on $G$ by group-automorphisms.
Suppose $N\lhd G$ is a closed normal $\alpha(\Gamma)$-invariant
subgroup. Denote by $\alpha_{N}:\Gamma\to\Aut(N)$ and $\alpha_{G/N}:\Gamma\to\Aut(G/N)$
the induced actions and by $\mu_{G},\mu_{N},\mu_{G/N}$ the Haar probability
measures on $G,N$ and $G/N$ respectively. Suppose there exists finite-entropy
generating partitions for $\alpha,\alpha_{N},\alpha_{G/N}$ and one
of the following hold.
\begin{enumerate}
\item $N$ is totally disconnected and there exists a clopen finite-index
normal subgroup $N_{0}\lhd N$ such that $\{gN_{0}:~g\in N\}$ is
a generating partition for $\alpha_{N}$.
\item $G$ is a compact  Lie group and the action $\alpha$
is by smooth automorphisms.
\end{enumerate}
Then \[
f_{\mu_{G}}(\alpha)=f_{\mu_{N}}(\alpha_{N})+f_{\mu_{G/N}}(\alpha_{G/N}).\]

\end{thm}
\begin{remark} The proof shows slightly more: if case (1) occurs
and $\alpha_{G/N}$ has a finite-entropy generating partition, then
$\alpha$ automatically has a finite-entropy generating partition.
This follows from Lemmas \ref{lem:standard} and \ref{lem:generating2}.
To be more precise, Lemma \ref{lem:standard} shows that $\alpha$
is measurably conjugate to a skew-product action of the form $\alpha_{G/N}\times_{\sigma}\alpha_{N}$.
If $\cP$ is a finite-entropy generating partition for $\alpha_{G/N}$
and $\cQ=\{gN_{0}:~g\in G\}$ is a generating partition for $\alpha_{N}$
of the kind described in case (1) above, then Lemma \ref{lem:generating2}
shows that $\cP\times\cQ$ is generating for $\alpha_{G/N}\times_{\sigma}\alpha_{N}$.
Because $\cP$ has finite-entropy and $\cQ$ is finite, $\cP\times\cQ$
has finite entropy as required. \end{remark}

\begin{remark} Suppose $N$ as above is totally disconnected and $N_{0}\lhd N$
is a closed finite-index normal subgroup (the fact $N_{0}$ is closed
and finite-index implies $N_{0}$ is clopen). Let $M=\bigcap_{g\in\Gamma}\alpha_{g}N_{0}$. Note $M$ is $\alpha(\Gamma)$-invariant. Let $\alpha_{G/M},\alpha_{N/M}$ be the induced actions on $G/M$
and $N/M$ respectively. Let $\mu_{G/M},\mu_{N/M}$ be the respective
Haar probability measures. Suppose that $\alpha_{G/M}$ and $\alpha_{G/N}$
admit finite-entropy generating partitions. Note that the clopen partition $\{gN_{0}/M:~g\in~G\}$
is a finite generating partition for $\alpha_{N/M}$, as it is the image of the clopen generating partion $\{gN_{0}:~g\in~G\}$ under the continuous projection $\pi:N\rightarrow N/M$. So the theorem above
implies \[
f_{\mu_{G/M}}(\alpha_{G/M})=f_{\mu_{N/M}}(\alpha_{N/M})+f_{\mu_{G/N}}(\alpha_{G/N}).\]
 By the previous remark, this formula holds as long as $\alpha_{G/N}$
admits a finite-entropy generating partition. \end{remark}

\section{Skew-products}

\label{sec:skew}

The proof of Theorem \ref{thm:main} is based on a more general skew-product
theorem of independent interest, the construction of which we recall
next.
\begin{defn}
\label{def:skew-product} Let $\Gamma$ be a group. Let $(X,\cB_{X},\nu)$
be a Lebesgue space equipped with a $\Gamma$-action $\alpha$ and $\G$ be a compact group with Borel $\sigma$-algebra $\B$ and Haar
measure $\mu$. Let $\beta$ be a $\Gamma$-action by group-automorphisms
on $\G$. Let $\sigma:\Gamma\times X\ra\G$ be a cocycle for $\beta$
and $\alpha$, i.e., $\sigma$ is a measurable mapping so that for
all $g,h\in\Gamma$, $x\in X$ \begin{equation}
\sigma(gh,x)=(\beta_{g}\sigma(h,x))\cdot\sigma(g,\alpha_{h}x).\label{eq:cocycle}\end{equation}
 Define the \em{skew-product action} $\alpha\times_{\sigma}\beta$
of $\Gamma$ on $X\times\G$ by: \[
(\alpha\times_{\sigma}\beta)_{g}(x,y)=(\alpha_{g}x,(\beta_{g}y)\cdot\sigma(g,x))\,\,(g\in\Gamma,\, x\in X,y\in\G).\]

\end{defn}
The connection between skew-product actions and the addition theorem
is the following standard result (which we obtained from \cite[Proof of Corollary 6.3]{Li11}).
\begin{lem}
\label{lem:standard} Let $\Gamma$ be a countable group, $G$ be
a compact metrizable group, $\alpha:\Gamma\to\Aut(G)$ an action of
$\Gamma$ on $G$ by group-automorphisms and $N\lhd G$ a closed normal
$\alpha(\Gamma)$-invariant subgroup. Denote by $\alpha_{N}:\Gamma\to\Aut(N)$
and $\alpha_{G/N}:\Gamma\to\Aut(G/N)$ the induced actions. Then there
is a cocycle $\sigma:\Gamma\times G/N\to N$ such that $\alpha_{G/N}\times_{\sigma}\alpha_{N}$
is measurably conjugate with $\alpha$.
\end{lem}
The main technical result of this paper is:
\begin{thm}
\label{thm:main-skew} Let $\Gamma=\langle s_{1},\ldots,s_{r}\rangle$
be a rank $r$ free group, $\alpha$ a measure-preserving action of
$\Gamma$ on a standard probability space $(X,\cB_{X},\nu)$, $G$
a compact metrizable group, $\beta$ an action of $\Gamma$ on $G$
by group-automorphisms, and $\sigma:\Gamma\times X\to G$ a cocycle
for these actions. Suppose that $G$ is totally disconnected and there
exists a finite-index clopen normal subgroup $N\lhd G$ such that
$\{gN:~g\in G\}$ is a generating partition for $\beta$. Let $\mu$
denote the Haar probability measure on $G$. Suppose also that there
is a finite-entropy generating partition for $\alpha$. Then \[
f_{\nu\times\mu}(\alpha\times_{\sigma}\beta)=f_{\nu}(\alpha)+f_{\mu}(\beta).\]
 %whenever this is well-defined (i.e., whenever there exist finite-entropy generating partitions for $\alpha,\beta$ and $\alpha \times_\sigma \beta$).

\end{thm}
The analog of this theorem for discrete countable amenable groups
$\Gamma$ when $G$ is an arbitrary compact metrizable group was established
in \cite{Li11}. The case $\Gamma=\Z$ was proven earlier by Thomas
\cite{Th71} and the case $\Gamma=\Z^{d}$ is shown in \cite{LSW90}.

Theorem \ref{thm:main-skew} is proven in the next section. Next we
combine this result with the following two lemmas to complete the proof
of Theorem \ref{thm:main}.
\begin{lem}
Let $M$ be a smooth compact Riemannian manifold. Let $T:M\to M$
be a diffeomorphism. Then $h_{\mu}(T)<\infty$ for any $T$-invariant
probability measure $\mu$. \end{lem}
\begin{proof}
This is due to Kushnirenko \cite{Ku65}. Alternatively, it follows
from Ruelle's inequality (see e.g. \cite[Corollary S.2.17]{KH95}). \end{proof}
\begin{lem}
Let $\Gamma=\langle s_{1},\ldots,s_{r}\rangle$ be a rank $r$ free
group with $r>1$, $M$ be a smooth compact Riemannian manifold, $\alpha$
a measure-preserving action of $\Gamma$ on $M$ by diffeomorphisms
and $\mu$ a non-atomic $\alpha(\Gamma)$-invariant probability measure
on $M$. Then $f_{\mu}(\alpha)=-\infty$ if there is a finite-entropy
generating partition for the action.
\end{lem}
\begin{proof}
Let $m=\max_{i=1}^{r}h_{\mu}(\alpha_{s_{i}})$. By the previous lemma,
$m<\infty$. Let $\cP$ be a finite-entropy generating partition for
$\alpha$. Let $N>0$. Because $\mu$ is non-atomic, there is a finite
partition $\cQ$ of $M$ with $H_{\mu}(\cQ)>N$. So after replacing
$\cP$ with $\cP\vee\cQ$ if necessary, we may assume that $H_{\mu}(\cP)>N$.
By Theorem \ref{thm:f*} \begin{eqnarray*}
f_{\mu}(\alpha) & = & f_{\mu}^{*}(\alpha,\cP)=\inf_{n>0}F_{\mu}^{*}(\alpha,\cP^{B(n)})\\
 & \le & (1-r)H_{\mu}(\cP)+\sum_{i=1}^{r}h_{\mu}(\alpha_{s_{i}},\cP)\\
 & \le & (1-r)N+rm.\end{eqnarray*}
 Because $N>0$ is arbitrary and $r>1$, this implies the lemma.
\end{proof}

\begin{proof}[Proof of Theorem \ref{thm:main} from Theorem \ref{thm:main-skew}]

Suppose item (1) holds. By Lemma \ref{lem:standard}, $\alpha$ is
measurable conjugate with $\alpha_{G/N}\times_{\sigma}\alpha_{N}$
for some cocycle $\sigma$. So Theorem \ref{thm:main-skew} implies
\[
f_{\mu_{G}}(\alpha)=f_{\mu_{G}}(\alpha_{G/N}\times_{\sigma}\alpha_{N})=f_{\mu_{G/N}}(\alpha_{G/N})+f_{\mu_{N}}(\alpha_{N})\]
 as required.

Suppose that item (2) holds; i.e., $G$ is a  compact
Lie group and $\alpha$ is an action by smooth group-automorphisms. If $G$ is finite then the theorem is clear because
\begin{eqnarray*}
f_{\mu_{G}}(\alpha)&=&-(r-1)\log|G|=-(r-1)\log|G/N|-(r-1)\log|N|\\
&=&f_{\mu_{G/N}}(\alpha_{G/N})+f_{\mu_{N}}(\alpha_{N}).
\end{eqnarray*}

%Because the case when $G=N$ is obvious,
%we assume $G\ne N$. %We also assume that the actions $\alpha,\alpha_{N}$
%and $\alpha_{G/N}$ all have finite-entropy generating partitions.

By Theorem \ref{thm:amenable}, we may assume, without loss of generality,
that $r>1$. If $G$ is infinite then, because it is compact, it has positive
dimension. So $\mu_{G}$ is non-atomic and the previous lemma implies
$f_{\mu_{G}}(\alpha)=-\infty$. Also, because $G$ is infinite, either $N$ or $G/N$ is infinite.
Therefore, either $\mu_{N}$ or $\mu_{G/N}$ is non-atomic. Of course,
the actions $\alpha_{N}$ and $\alpha_{G/N}$ are smooth (because every continuous homomorphism between Lie groups is analytic \cite[Ch. II, Theorem 2.6]{He01}). It should
be noted that the $f$-invariant does not take on the value $+\infty$.
So the previous lemma implies $f_{\mu_{G/N}}(\alpha_{G/N})+f_{\mu_{N}}(\alpha_{N})=-\infty$.
\end{proof}

\section{Relative entropy}

\label{sec:relative} The proof of Theorem \ref{thm:main-skew} uses
the relative $f$-invariant theory developed in \cite{Bo10c}, which
we review here. Let $(X,\cB_{X},\nu)$ be a standard probability
space. Let $\cP$ be a countable measurable partition of $X$ and
let $\cF\subset\cB_{X}$ be a sub-sigma algebra. Recall that for a.e.
$x\in X$, the conditional expectation $\E[\cdot|\cF](x)$ is a probability
measure on $(X,\cB_{X})$ satisfying
\begin{enumerate}
\item $x\mapsto\E[A|\cF](x)$ is $\cF$-measurable for any $A\in\cB_{X}$;
\item $\int\E[A|\cF](x)~d\nu(x)=\nu(A)$ for any $A\in\cB_{X}$.
\end{enumerate}
The information function $I(\cP|\cF)$ is a function on $X$ defined
by \[
I(\cP|\cF)(x)=-\E[P_{x}|\cF](x)\log(\E[P_{x}|\cF](x))\]
 where $P_{x}\in\cP$ is the unique partition element with $x\in P_{x}$.
The Shannon entropy of $\cP$ {\em relative} to $\cF$ is \[
H_{\nu}(\cP|\cF)=\int I(\cP|\cF)(x)~d\nu(x).\]
 If $T$ is a measure-preserving transformation of $(X,\cB_{X},\nu)$
then the entropy rate of $(T,\cP)$ {\em relative} to $\cF$ is
\[
h_{\nu}(T,\cP|\cF)=\lim_{n\to\infty}\frac{1}{2n+1}H_{\nu}\left(\bigvee_{i=-n}^{n}T^{i}\cP|~\cF\right).\]
 This is well-defined whenever $\cF$ is $T$-invariant. We also define
the entropy rate of $T$ {\em relative} to $\cF$ by \[
h_{\nu}(T|\cF)=\sup_{\cP}h_{\nu}(T,\cP|\cF)\]
 where the supremum is over all finite-entropy partitions $\cP$ of
$X$.

Now suppose $\Gamma=\langle s_{1},\ldots,s_{r}\rangle$ and $\alpha$
is a measure-preserving action of $\Gamma$ on $(X,\cB_{X},\nu)$.
Define \begin{eqnarray*}
F_{\nu}(\alpha,\cP|\cF) & = & (1-2r)H_{\nu}(\cP|\cF)+\sum_{i=1}^{r}H_{\nu}(\cP\vee\alpha_{s_{i}}\cP|\cF)\\
f_{\nu}(\alpha,\cP|\cF) & = & \inf_{n>0}F_{\nu}\left(\alpha,\cP^{B(n)}|\cF\right).\end{eqnarray*}
Also define
\begin{eqnarray*}
F_{\nu}^{*}(\alpha,\cP|\cF) & = & (1-r)H_{\nu}(\cP|\cF)+\sum_{i=1}^{r}h_{\nu}(\alpha_{s_{i}},\cP|\cF)\\
f_{\nu}^{*}(\alpha,\cP|\cF) & = & \inf_{n>0}F_{\nu}^{*}\left(\alpha,\cP^{B(n)}|\cF\right).\end{eqnarray*}
\begin{thm}
\label{thm:KS-rel} Let $\alpha$ be a measure-preserving action of
$\Gamma$ on a standard probability space $(X,\cB_{X},\nu)$. If $\cP,\cQ$
are any two finite-entropy generating partitions for $\alpha$ and
$\cF\subset\cB_{X}$ is an $\alpha(\Gamma)$-invariant sub-$\sigma$-algebra
then $f_{\nu}(\alpha,\cP|\cF)=f_{\nu}(\alpha,\cQ|\cF)=f_{\nu}^*(\alpha,\cP|\cF)=f_{\nu}^*(\alpha,\cQ|\cF)$. \end{thm}
\begin{proof}
This is implied by \cite[Theorems 5.3, 9.1]{Bo10c}. The proof requires a small correction;
see \cite{BG12}.\end{proof}
Because of this theorem, we define $f_{\nu}(\alpha|\cF)\triangleq f_{\nu}(\alpha,\cP|\cF)$
where $\cP$ is any finite-entropy generating partition for $\alpha$.
If there does not exist a finite-entropy generating partition for
$\alpha$ then $f_{\nu}(\alpha|\cF)$ is undefined.
\begin{thm}
\label{thm:AR}{[}The $f$-invariant Abramov-Rokhlin Addition Formula{]}
Let $\alpha$ be a measure-preserving action of $\Gamma$ on a standard
probability space $(X,\cB_{X},\nu)$. Let $\cP,\cQ$ be finite-entropy
partitions of $X$. Let $\Sigma(\cQ)$ be the smallest $\Gamma$-invariant
sub-$\sigma$-algebra containing $\cQ$. Then \[
f_{\nu}(\alpha,\cP\vee\cQ)=f_{\nu}(\alpha,\cQ)+f_{\nu}(\alpha,\cP|\Sigma(\cQ)).\]
 %where $f_{\nu|\cF}(\alpha,\cF)$ denotes the $f$-invariant of the induced action of $\Gamma$ on $(X,\cF,\nu|\cF)$.
\end{thm}
\begin{proof}
This is \cite[Theorem 1.3]{Bo10c}. The proof requires a small correction;
see \cite{BG12}.
\end{proof}
%%%%%%%%%%%%%%%%%%%%%

\section{A key Lemma}

\label{sec:key}

The purpose of this section is to prove the key lemma below for skew-products
of $\Z$-actions. Let $(X,\cB_{X},\nu)$ be a Lebesgue space, $T\in\Aut(X,\cB_{X},\nu)$,
$\G$ a compact metrizable group, equipped with Haar measure $\mu$
and $S$ a group-automorphism of $\G$. A cocycle for $T$ and
$S$ is a cocycle for the actions of $\Z$ induced by $T$ and $S$.
That is, it is a measurable map $\sigma:\Z\times X\rightarrow\G$
such that \begin{equation}
\sigma(n+m,x)=(S^{n}\sigma(m,x))\cdot\sigma(n,T^{m}x).\label{eq:cocycle2}\end{equation}

\begin{lem}
\label{Lemma:Skew Entropy for clopen partition} Let $(X,\cB_{X},\nu),\G,T,S,\sigma$
be as above. Let $\cQ$ be a finite-entropy partition of $\G$. Let
\[
K(\cQ)=\sup_{g\in\G}H(\cQ g|\cQ)+H(\cQ|\cQ g).\]
 Then %Then for any integer $n\ge 1$, if $\cP^n = \bigvee_{k=0}^{n-1} (T\times_\sigma S)^{-k}  (X\times\cQ)$ and $\cQ^n = \bigvee_{k=0}^{n-1} S^{-k} \cQ$, then
\[
\Big|h_{\nu\times\mu}\big(T\times_{\sigma}S,X\times\cQ|\cB_{X}\big)-h_{\mu}(S,\cQ)\Big|\le K(\cQ).\]
 \end{lem}
\begin{proof}
By the definition of conditional entropy :

\begin{eqnarray*}
&&h_{\nu\times\mu}(T\times_{\sigma}S,X\times\cQ|\cB_{X})\\
&=&\lim_{m\rightarrow\infty}\frac{1}{m}H_{\nu\times\mu}\left(\bigvee_{k=0}^{m-1}(T\times_{\sigma}S)^{-k}X\times\cQ|\cB_{X}\right)\\
&=&\lim_{m\rightarrow\infty}\frac{1}{m}\int H_{\delta_x\times\mu}\left(\bigvee_{k=0}^{m-1}(T\times_{\sigma}S)^{-k}X\times\cQ\right) ~d\nu(x)
\end{eqnarray*}
where $\delta_x$ is the Dirac probability measure concentrated on $\{x\}$.
 %Observe that %$(T\times_\sigma S) = T \times_{\sigma} S$ where $\sigma:\Z \times  X \to \G$ is the cocycle $\sigma(k,x)=\sigma(nk,x)$.

%$$\bigvee_{k=0}^{m-1}(T\times_\sigma S)^{-k} X\times\cQ = \bigvee_{k=0}^{mn-1}(T\times_\sigma S)^{-k}  X\times\cQ.$$

We claim that for any set $P\subset\G$, \[
\{x\}\times\G\cap(T\times_{\sigma}S)^{-k}(X\times P)=\{x\}\times S^{-k}(P\sigma(k,x)^{-1}).\]
 Indeed, $(x,y)$ is contained in $(T\times_{\sigma}S)^{-k}(X\times P)$
if and only if \[
(T\times_{\sigma}S)^{k}(x,y)=(T^{k}x,(S^{k}y)\sigma(k,x))\in X\times P\]
 which occurs if and only if \[
y\in S^{-k}(P\sigma(k,x)^{-1}).\]
 % = (S^{-k}P)S^{-k}(\sigma(k,x)^{-1})=(S^{-k}P)\sigma(-k,T^kx).$$
%The last equality uses the cocycle equation.
%To simplify notation, note that the cocycle equation implies $S^{-k}(\sigma(k,x)^{-1}) = \sigma(-k,T^kx)$.
So if \[
\cQ_{x}^{m}=\bigvee_{k=0}^{m-1}S^{-k}(\cQ\sigma(k,x)^{-1}).\]
 then \begin{eqnarray*}
H_{\delta_x\times\mu}\left(\bigvee_{k=0}^{m-1}(T\times_{\sigma}S)^{-k}X\times\cQ\right) & = & H_{\mu}(\cQ_{x}^{m})\end{eqnarray*}
which implies: \begin{equation}
h_{\nu\times\mu}((T\times_{\sigma}S),X\times\cQ|\cB_{X})=\lim_{m\ra\infty}\frac{1}{m}\int_{X}H_{\mu}(\cQ_{x}^{m})d\nu(x)\label{skewed entropy}\end{equation}
Define: \[
\cQ^{m}=\bigvee_{k=0}^{m-1}S^{-k}\cQ\]
 By the definition of entropy: \begin{equation}
h_{\mu}(S,\cQ)=\lim_{m\ra\infty}\frac{1}{m}\int_{X}H_{\mu}(\cQ^{m})d\nu(x)\label{direct entropy}\end{equation}
Note $|H_{\mu}(\cQ^{m})-H_{\mu}(\cQ_{x}^{m})|\leq H_{\mu}(\cQ^{m}|\cQ_{x}^{m})+H_{\mu}(\cQ_{x}^{m}|\cQ^{m})$.
Thus:

\begin{eqnarray*}
|H_{\mu}(\cQ^{m})-H_{\mu}(\cQ_{x}^{m})| & \leq & \sum_{k=0}^{m-1}H_{\mu}(S^{-k}\cQ|S^{-k}(\cQ\sigma(k,x)^{-1}))+H_{\mu}(S^{-k}(\cQ\sigma(k,x)^{-1})|S^{-k}\cQ)\\
 & = & \sum_{k=0}^{m-1}H_{\mu}(\cQ|\cQ\sigma(k,x)^{-1})+H_{\mu}(\cQ\sigma(k,x)^{-1})|\cQ)\le mK(\cQ).\end{eqnarray*}
Finally (\ref{skewed entropy}) and (\ref{direct entropy}) imply
$\left|h_{\nu\times\mu}((T\times_{\sigma}S),X\times\cQ|\cB_{X})-h_{\mu}(S,\cQ)\right|\le K(\cQ)$.
\end{proof}
%%%%%%%%%%%%%%%%%%%%%%%%%%%%%%%%%%%

\section{Proof of Theorem \ref{thm:main-skew}}

\label{sec:proof}

For this section, let $\Gamma,(X,\cB_{X},\nu),(\G,\cB_{G},\mu),\alpha,\beta,\sigma$
be as in Theorem \ref{thm:main-skew}. A {\em special partition} of
$\G$ is a partition $\cQ$ such that there exists a finite-index
normal clopen subgroup $N<\G$ such that $\cQ=\{gN:~g\in\G\}$. The next lemma is left as an exercise for the reader.
\begin{lem}
If $\cQ$ is special and $T_{1},\ldots,T_{n}$ are automorphisms of
$\G$ then $\bigvee_{i=1}^{n}T_{i}\cQ$ is also special. \end{lem}

\begin{lem}
\label{lem:key} If $\cP$ is any finite-entropy partition of $X$
and $\cQ$ is a special partition of $G$ then \[
F_{\nu\times\mu}^{*}(\alpha\times_{\sigma}\beta,\cP\times\cQ|\cB_{X})=F_{\mu}^{*}(\beta,\cQ).\]
 \end{lem}
\begin{proof}
Because $\cQ g=\cQ$ for any $g\in G$, it follows that $K(\cQ)=0$
where $K(\cdot)$ is as defined in Lemma \ref{Lemma:Skew Entropy for clopen partition}.
So that Lemma implies \begin{eqnarray*}
F_{\nu\times\mu}^{*}(\alpha\times_{\sigma}\beta,\cP\times\cQ|\cB_{X}) & = & (1-r)H_{\nu\times\mu}(\cP\times\cQ|\cB_{X})+\sum_{i=1}^{r}h_{\nu\times\mu}((\alpha\times_{\sigma}\beta)_{s_{i}},\cP\times\cQ|\cB_{X})\\
 & = & (1-r)H_{\mu}(\cQ)+\sum_{i=1}^{r}h_{\mu}(\beta_{s_{i}},\cQ)=F_{\mu}^{*}(\beta,\cQ).\end{eqnarray*}

\end{proof}
%\begin{lem}\label{lem:generating1}
%If there is a finite-entropy generating partition for $\beta$ then there is a generating special partition for $\beta$.
%\end{lem}

%\begin{proof}
%I'm not sure about this. The rest of this section is going to work under the assumption that there exists a generating special partition for $\beta$.
%\end{proof}

\begin{lem}
\label{lem:keyequation} Let $\cQ$ be a special partition of $G$,
$g\in\Gamma$ and $\cP_{g}$ denote the partition of $X$ obtained
by pulling $\beta_{g}(\cQ)$ back under the cocycle $\sigma(g,\cdot)$.
Also, let $\cP'$ be an arbitrary measurable partition of $X$. Then
\[
(\alpha\times_{\sigma}\beta)_{g}((\cP_{g}\vee\cP')\times\cQ)=\alpha_{g}(\cP_{g}\vee\cP')\times\beta_{g}(\cQ)\]
 (up to sets of measure zero). \end{lem}
\begin{proof}
Let $N$ be the finite-index clopen normal subgroup of $G$ such that
$\cQ=\{qN:~q\in G\}$. Let $P\in\cP_{g},P'\in\cP'$ and $qN\in\cQ$.
It suffices to show that there exists some $q''\in G$ such that \[
(\alpha\times_{\sigma}\beta)_{g}((P\cap P')\times qN)=\alpha_{g}(P\cap P')\times q''\beta_{g}(N)\]
 up to sets of measure zero. By definition of $\cP_{g}$, there exists
a coset $q'\beta_{g}(N)\in G/\beta_{g}(N)$ such that for every $y\in P$,
$\sigma(g,y)\in q'\beta_{g}(N)$.

Let $x\in P\cap P'$ and $n\in N$. Then there exists some $m\in N$
such that \[
(\alpha\times_{\sigma}\beta)_{g}(x,qn)=(\alpha_{g}x,\beta_{g}(qn)\sigma(g,x))=(\alpha_{g}x,\beta_{g}(qn)q'\beta_{g}(m)).\]
 Because $N$ is normal, $\beta_{g}(qn)q'\beta_{g}(m)\in\beta_{g}(q)q'\beta_{g}(N)$.
Thus $(\alpha\times_{\sigma}\beta)_{g}(x,qn)\in\alpha_{g}(P\cap P')\times\beta_{g}(q)q'\beta_{g}(N)$.
Since $(\alpha\times_\sigma \beta)_g$ preserves $\nu\times \mu$,  it follows that $(\alpha\times_{\sigma}\beta)_{g}((P\cap P')\times qN)=\alpha_{g}(P\cap P')\times q''\beta_{g}(N)$
up to sets of measure zero. %Because $P,P',qN$ are arbitrary, $(\alpha\times_{\sigma}\beta)_{g}((\cP_{g}\vee\cP')\times\cQ)=(\alpha_{g}(\cP_{g}\vee\cP'))\times(\beta_{g}\cQ)$
%as claimed.
 \end{proof}
\begin{lem}
\label{lem:generating2} Let $\cP,\cQ$ be measurable partitions for
$\alpha,\beta$ respectively. Suppose $\cQ$ is special and $\cP$
is generating. Let $\Sigma(\cP,\cQ)$ be the smallest $\alpha\times_{\sigma}\beta(\Gamma)$-invariant
$\sigma$-algebra containing $\cP\times\cQ$. Similarly, let $\Sigma(\cQ)$
be the smallest $\beta(\Gamma)$-invariant $\sigma$-subalgebra of $\cB_{G}$
which contains $\cQ$.

Then $\Sigma(\cP,\cQ)$ is the smallest $\sigma$-algebra containing $\cB_{X}\times\Sigma(\cQ)$
(up to sets of measure zero).
\end{lem}
%Question: does this lemma hold without the special hypothesis on $\cQ$?

%Let $\Sigma(\cP,\cQ)$ be the smallest $\alpha \times_\sigma\beta(\Gamma)$-invariant $\sigma$-algebra containing $\cP\times \cQ$.

\begin{proof}
Clearly, $\cP\times G$ is contained in $\Sigma(\cP,\cQ)$. Because
\[
(\alpha\times_{\sigma}\beta)_{g}(\cP\times\G)=(\alpha_{g}\cP)\times G,\quad\forall g\in\Gamma,\]
 it follows that $(\alpha_{g}\cP)\times G\subset\Sigma(\cP,\cQ)$
for every $g\in\Gamma$. Because $\cP$ is generating, this implies
$\cB_{X}\times G\subset\Sigma(\cP,\cQ)$ (up to sets of measure zero).

For each $g\in\Gamma$, recall that $\cP_{g}$ is the partition of
$X$ obtained by pulling $\beta_{g}(\cQ)$ back under the cocycle
$\sigma(g,\cdot)$. % In other words,
%$$\cP_g=\{ \{x \in X:~\sigma(g,x) \in q'\beta_g(N)\}:~q'\beta_g(N) \in G/\beta_g(N)\}.$$
Because $\sigma(g,\cdot)$ is $\cB_{X}$-measurable, $\cP_{g}\times\cQ$
is contained in $\Sigma(\cP,\cQ)$. By Lemma \ref{lem:keyequation},
\[
(\alpha\times_{\sigma}\beta)_{g}(\cP_{g}\times\cQ)=(\alpha_{g}\cP_{g})\times(\beta_{g}\cQ)\subset\Sigma(\cP,\cQ)\]
 (up to sets of measure zero). Because $X\times\beta_{g}\cQ$ coarsens
$(\alpha_{g}\cP_{g})\times(\beta_{g}\cQ)$, it follows that $X\times\beta_{g}\cQ\subset\Sigma(\cP,\cQ)$
for every $g\in\Gamma$. By definition of $\Sigma(\cQ)$, this implies
$X\times\Sigma(\cQ)\subset\Sigma(\cP,\cQ)$. Because $X\times\Sigma(\cQ)$
and $\cB_{X}\times G$ generate $\cB_{X}\times\Sigma(\cQ)$ (up to
sets of measure zero), this implies $\Sigma(\cP,\cQ)\supset\cB_{X}\times\Sigma(\cQ)$.

To show the opposite inclusion, it suffices to show that $(\alpha\times_{\sigma}\beta_{g})(\cP\times\cQ)\in\cB_{X}\times\Sigma(\cQ)$
for any $g\in\Gamma$. By the previous lemma, \[
(\alpha\times_{\sigma}\beta_{g})(\cP\times\cQ)\le(\alpha\times_{\sigma}\beta)_{g}((\cP_{g}\vee\cP)\times\cQ)=(\alpha_{g}(\cP_{g}\vee\cP))\times(\beta_{g}\cQ)\in\cB_{X}\times\Sigma(\cQ).\]
 \end{proof}

\begin{proof}[Proof of Theorem \ref{thm:main-skew}]
%By Theorem \ref{thm:AR}
%\begin{eqnarray*}
%f_{\nu\times\mu}(\alpha\times_{\sigma}\beta) & = & f_{\nu}(\alpha)+f_{\nu\times\mu}(\alpha\times_{\sigma}\beta|\cB_{X}).\end{eqnarray*}
 Let $\cP$ be a finite-entropy generating partition for $\alpha$
and $\cQ$ be a special generating partition for $\beta$. By the
previous lemma, $\cP\times\cQ$ is generating for $\alpha\times_{\sigma}\beta$.
So Theorems  \ref{thm:KS-rel} and \ref{thm:AR} imply \begin{eqnarray*}
f_{\nu\times\mu}(\alpha\times_{\sigma}\beta) - f_{\nu}(\alpha) &=& f_{\nu\times\mu}(\alpha\times_{\sigma}\beta|\cB_{X})\\
 & = & \inf_{n>0}F_{\nu\times\mu}^{*}(\alpha\times_{\sigma}\beta,(\cP\times\cQ)^{B(n)}|\cB_{X}).\end{eqnarray*}
% where \[
%(\cP\times\cQ)^{W}=\bigvee_{w\in W}(\alpha\times_{\sigma}\beta)_{w}\cP\times\cQ\]
 %and we take the infimum over all finite sets $W\subset\Gamma$. More
%generally, if $\cL$ is any partition of $X\times G$, we let $\cL^{W}=\bigvee_{w\in W}(\alpha\times_{\sigma}\beta)_{w}\cL$.
%If $\cL$ is a partition of $X$, we let $\cL^{W}=\bigvee_{w\in W}\alpha_{w}\cL$
%and if $\cL$ is a partition of $G$ then we let $\cL^{W}=\bigvee_{w\in W}\beta_{w}\cL$.

For each $g\in\Gamma$, let $\cP_{g}$ be the partition of $X$ obtained
by pulling $(\beta_{g}\cQ)$ back under $\sigma(g,\cdot)$. By Lemma
\ref{lem:keyequation}, for any partition $\cP'$ of $X$ \[
(\alpha\times_{\sigma}\beta)_{g}((\cP_{g}\vee\cP')\times\cQ)=(\alpha_{g}(\cP_{g}\vee\cP'))\times(\beta_{g}\cQ).\]
Given an integer $n>0$ let $\cR_{n}=\bigvee_{g\in B(n)}\cP_{g}$. % and $\cR_{n}^{n}=\bigvee_{w\in B(n)}\alpha_{w}\cR_{n}$.
By Lemma \ref{lem:keyequation}, \begin{eqnarray*}
(\cP\vee\cR_{n})\times\cQ)^{B(n)} & = & \bigvee_{w\in B(n)}(\alpha\times_{\sigma}\beta)_{w}((\cP\vee\cR_{n})\times\cQ)\\
 & = & \bigvee_{w\in B(n)}\alpha_{w}(\cP\vee\cR_{n})\times\beta_{w}(\cQ)\\
 & = & (\cP\vee\cR_{n})^{B(n)}\times\cQ^{B(n)}.\end{eqnarray*}
 Because we are conditioning on $\cB_{X}$ and $(\cR_{n}\times G)^{B(n)}=(\cR_{n}^{B(n)}\times G)$,
\begin{eqnarray*}
F_{\nu\times\mu}^{*}(\alpha\times_{\sigma}\beta,(\cP\times\cQ)^{B(n)}|\cB_{X}) & = & F_{\nu\times\mu}^{*}(\alpha\times_{\sigma}\beta,(\cP\times\cQ)^{B(n)}\vee\cR_{n}^{B(n)}\times G|\cB_{X})\\
 & = & F_{\nu\times\mu}^{*}(\alpha\times_{\sigma}\beta,((\cP\vee\cR_{n})\times\cQ)^{B(n)}|\cB_{X})\\
 & = & F_{\nu\times\mu}^{*}(\alpha\times_{\sigma}\beta,((\cP\vee\cR_{n})^{B(n)}\times\cQ^{B(n)}|\cB_{X}).\end{eqnarray*}
 %where the last equality holds because $(\cP \vee \cR_W)\times \cQ)^W = (\cP \vee \cR_W)^W \times \cQ^W$ is a consequence of the definition of $\cR_W$ where $\cQ^W = \bigvee_{w\in W} \beta_w \cQ$, is a special partition.
By Lemma \ref{lem:key}, \[
F_{\nu\times\mu}^{*}(\alpha\times_{\sigma}\beta,((\cP\vee\cR_{n})^{B(n)}\times\cQ^{B(n)}|\cB_{X})=F_{\mu}^{*}(\beta,\cQ^{B(n)}).\]
 So we now have \begin{eqnarray*}
f_{\nu\times\mu}(\alpha\times_{\sigma}\beta) & = & f_{\nu}(\alpha)+f_{\nu\times\mu}(\alpha\times_{\sigma}\beta|\cB_{X})\\
 & = & f_{\nu}(\alpha)+\inf_{n>0}F_{\nu\times\mu}^{*}(\alpha\times_{\sigma}\beta,(\cP\times\cQ)^{B(n)}|\cB_{X})\\
 & = & f_{\nu}(\alpha)+\inf_{n>0}F_{\mu}^{*}(\beta,\cQ^{B(n)})\\
 & = & f_{\nu}(\alpha)+f_{\mu}(\beta).\end{eqnarray*}
 The last equality holds by Theorem \ref{thm:f*}.
\end{proof}

%%%%%%%%%%%%%%%%%%%%%%%

\section{Examples}

\label{sec:example}

%\section{An Example By Ornstein \& Weiss Revisited\label{sec:OW Example}}

It is convenient to introduce the following notation. Let $\Gamma=\langle s_{1},\ldots,s_{r}\rangle$
be the rank $r$ free group. If $K$ is a set then $K^{\Gamma}$ is
the set of all functions $x:\Gamma\to K$. The {\em shift-action}
of $\Gamma$ on $K^{\Gamma}$ is defined as follows. For $g,f\in\Gamma$
and $x\in K^{\Gamma}$, $gx\in K^{\Gamma}$ is the map $(gx)(f)=x(g^{-1}f)$.

If $\Gamma$ acts on a compact group $G$ and the action is understood,
we write $f(\Gamma\cc G)$ to mean the $f$-invariant of the action
of $G$ with respect to Haar measure.

\subsection{The Ornstein-Weiss Example}

This example comes from the appendix to \cite{OW87}. To explain its
relevance, let us recall some basic facts from classical entropy theory.
Let $\Delta$ be a countable amenable group, $K$ a finite set and $u$ the
uniform probability measure on $K$. It is straightforward to compute
the entropy of the shift action of $\Delta$ on $(K^{\Delta},u^{\Delta})$:
it is $\log|K|$. Because entropy never increases under a factor map,
it follows that if $|K|>1$ then the action $\Delta\cc(K^{\Delta},u^{\Delta})$
cannot factor onto the action $\Delta\cc((K\times K)^{\Delta},(u\times u)^{\Delta})$.

By contrast, Ornstein and Weiss showed that if $\Gamma$ is the rank
$2$ free group then $\Gamma\cc(\Z/2\Z)^{\Gamma}$ factors onto $\Gamma\cc(\Z/2\Z\times\Z/2\Z)^{\Gamma}$.
This convinced many researchers that there could not be an entropy
theory for free groups.

The factor map is defined by \[
\phi:(\Z/2\Z)^{\Gamma}\to(\Z/2\Z\times\Z/2\Z)^{\Gamma},\]
 \[
\phi(x)(g)=(x(g)+x(gs_{1}),x(g)+x(gs_{2})),\forall x\in(\Z/2\Z)^{\Gamma},g\in\Gamma.\]
 We consider $(\Z/2\Z)^{\Gamma}$ and $(\Z/2\Z\times\Z/2\Z)^{\Gamma}$
as compact groups under pointwise addition. It is a straightforward
exercise to show that $\phi$ is a surjective homomorphism which is
equivariant with respect to the shift-actions of $\Gamma$ and therefore,
defines a factor map. Moreover, the kernel of $\phi$ consists of
two elements, $x_{0},x_{1}$, where $x_{i}:\Gamma\to\Z/2\Z$ is defined
by $x_{i}(g)=i$. Let $N=\{x_{0},x_{1}\}$. Because $N$ is finite,
it clearly satisfies the conditions of Theorem \ref{thm:main}. So
that result implies \[
f(\Gamma\cc(\Z/2\Z)^{\Gamma})=f(\Gamma\cc N)+f(\Gamma\cc(\Z/2\Z\times\Z/2\Z)^{\Gamma}).\]
 In \cite{Bo10a}, it is shown that $f(\Gamma\cc(\Z/2\Z)^{\Gamma})=\log(2)$
and $f(\Gamma\cc(\Z/2\Z\times\Z/2\Z)^{\Gamma})=\log(4)$ as expected.
Therefore, $f(\Gamma\cc N)=-\log(2)$. This is easy to verify by direct
computation.

%
\begin{comment}
This example convinced many researchers that there could not be an
entropy theory for the free group. The reason is that in classical
entropy theory (for amenable groups), entropy never increases under
a factor map. Moreover, if $\Delta$ is an amenable group, then the
shift action of $\Delta$ on $K^{\Delta}$ (where $K$ is a finite
set and the measure is $u^{\Gamma}$ where $u$ is the uniform probability
measure on $K$) has entropy $\log|K|$.
\end{comment}
{}

\subsection{A generalization}

The example above can be generalized with the help of \cite[proof of Theorem B]{MRV11}
which states the following: if $\Gamma=\langle s_{1},\ldots,s_{r}\rangle$
is any finite rank free group, $K$ is any compact second countable
group, $K^{\Gamma}$ is the group of all functions $x:\Gamma\to K$
under pointwise multiplication and $K$ is identified with the constant
functions in $K^{\Gamma}$ then the action $\Gamma\cc K^{\Gamma}/K$
is measurably conjugate to $\Gamma\cc(K^{r})^{\Gamma}$ (where the
measures involved are the Haar measures and the actions are the shift
actions).

%When $K$ is finite, $Q = \{x \in K^\Gamma:~ x(e)=e_K\}$ (where $e_K$ denotes the identity element in $K$) is a finite-index normal clopen subgroup and $\{gQ:~g\in K^\Gamma\}$ is generating.

When $K$ is finite, we can apply Theorem \ref{thm:main} to obtain
\[
f(\Gamma\cc K^{\Gamma})=f(\Gamma\cc K)+f(\Gamma\cc(K^{r})^{\Gamma}).\]
 This is easy to check: $f(\Gamma\cc K^{\Gamma})=\log(|K|)$ and $f(\Gamma\cc(K^{r})^{\Gamma})=r\log(|K|)$
by \cite{Bo10a}. By a straightforward computation, $f(\Gamma\cc K)=-(r-1)\log|K|$.

\subsection{An algebraic example}

As above, let $\Gamma=\langle s_{1},\ldots,s_{r}\rangle$ be a finite
rank free group. Let $p>1$ be a prime number and $h\in(\Z/p\Z)\Gamma$.
We consider $h$ as a function from $\Gamma$ to $\Z$ such that $h(s)=0$
for all but finitely many $s\in\Gamma$. Define the convolution operator $\phi_{h}:(\Z/p\Z)^{\Gamma}\to(\Z/p\Z)^{\Gamma}$
by \[
\phi_{h}(x)(g)=\sum_{s\in\Gamma}x(gs)h(s^{-1}),\quad\forall g\in\Gamma.\]
 This is a $\Gamma$-equivariant homomorphism. Let $X_{h,p}$ denote
the kernel of $\phi_{h}$. Let $X_{h,p}^{*}<X_{h,p}$ be the subgroup
consisting of all elements $x\in X_{h,p}$ with $x(e)=0$. This is
a finite-index normal clopen subgroup and $\{gX_{h,p}^{*}:~g\in X_{h,p}\}$
is a generating partition for the shift-action of $\Gamma$. Therefore,
we can apply Theorem \ref{thm:main} to obtain \[
f(\Gamma\cc(\Z/p\Z)^{\Gamma})=f(\Gamma\cc X_{h,p})+f(\Gamma\cc\phi_{h}((\Z/p\Z)^{\Gamma})).\]

\begin{thm}
\label{thm:onto} $\phi_{h}$ is onto if $h$ is nonzero.
\end{thm}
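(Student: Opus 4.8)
The claim is that the convolution operator $\phi_h\colon(\Z/p\Z)^\Gamma\to(\Z/p\Z)^\Gamma$, $\phi_h(x)(g)=\sum_{s\in\Gamma}x(gs)h(s^{-1})$, is surjective whenever $h\neq 0$. I would argue by a duality/injectivity-of-the-dual argument, which is the standard way to handle such convolution operators on $K^\Gamma$ with $K=\Z/p\Z$. Identify the Pontryagin dual of the compact group $(\Z/p\Z)^\Gamma$ with the group ring $(\Z/p\Z)\Gamma$ (finitely supported functions $\Gamma\to\Z/p\Z$), the pairing being $\langle x,a\rangle=\prod_{g}\zeta^{x(g)a(g)}$ where $\zeta$ is a primitive $p$-th root of unity. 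Under this identification the adjoint $\widehat{\phi_h}$ of $\phi_h$ is, up to an inversion $s\mapsto s^{-1}$, right multiplication by $h$ on the group ring $(\Z/p\Z)\Gamma$. Then surjectivity of $\phi_h$ (which has closed image, being a continuous homomorphism of compact groups) is equivalent, by Pontryagin duality, to injectivity of $\widehat{\phi_h}$, i.e. to the statement that right multiplication by $h$ has trivial kernel in $(\Z/p\Z)\Gamma$.

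\smallskip

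\textbf{Key steps.} First, make the dual pairing and the computation $\widehat{\phi_h}(a)=$ (right or left multiplication of $a$ by $h$, with the appropriate $s\mapsto s^{-1}$ twist) precise; this is a routine but necessary bookkeeping step. Second, invoke the fact that the group ring of a free group over a field has no zero divisors: $(\Z/p\Z)\Gamma$ embeds in its classical (Ore) division ring of fractions, since free groups are (bi-)orderable, hence left- and right-orderable, so $(\Z/p\Z)\Gamma$ is an Ore domain. Concretely: if $a\cdot h=0$ with $a,h\in(\Z/p\Z)\Gamma$ and $h\neq 0$, then in the orderable group ring one compares the leading terms with respect to a fixed left-invariant order on $\Gamma$ — the product of the leading terms of $a$ and $h$ is the leading term of $ah$ and is nonzero — forcing $a=0$. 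Thus $\widehat{\phi_h}$ is injective, so $\phi_h$ is onto. (Alternatively one may cite that group rings of bi-orderable, in particular free, groups have no zero divisors; the Malcev–Neumann construction embeds $(\Z/p\Z)\Gamma$ into a skew field of formal power series indexed by the ordered group.)

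\smallskip

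\textbf{Main obstacle.} The only real content is the absence of zero divisors in $(\Z/p\Z)\Gamma$; everything else is the duality dictionary. Since $\Gamma$ is free, this is classical — it follows from the existence of a bi-invariant (or even just a left-invariant, via the leading-term argument above) order on $\Gamma$ — so no deep input like the Kaplansky conjecture in full generality is needed. The one point requiring a little care is matching the side of multiplication and the $s\mapsto s^{-1}$ twist correctly when computing $\widehat{\phi_h}$, and noting that orderability of $\Gamma$ implies the leading-term argument works equally on the relevant side; this is where I would spend the bulk of the write-up.
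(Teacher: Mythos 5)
Your proof is essentially correct, but it takes a genuinely different route from the paper. The paper's argument is entirely combinatorial and works directly in the Cayley tree of $\Gamma$: it passes to the convex hull $\oF$ of the support of $h$, proves via a sequence of lemmas about radii, centers and extreme points of finite connected subsets of the tree that one can enumerate $\Gamma$ as $\gamma_0,\gamma_1,\ldots$ with each initial segment connected and $\gamma_n\oF\nsubseteq\bigcup_{i<n}\gamma_i\oF$, and then builds a preimage of any target $y$ by induction (adjusting the value of $x$ at a single new point $\gamma_{n+1}f$, $f$ an extreme point of $\oF$, at each step) and concludes by compactness. Your route --- Pontryagin duality reducing surjectivity of $\phi_h$ to injectivity of the adjoint, which is (up to the $s\mapsto s^{-1}$ twist you correctly flag) multiplication by $h$ on the group ring $(\Z/p\Z)\Gamma$, followed by the fact that group rings of bi-orderable groups over a field are domains --- is shorter and conceptually cleaner, at the cost of importing Pontryagin duality and the orderability of free groups (via Magnus embedding or residual torsion-free nilpotence); the paper's proof is longer but self-contained and elementary. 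Two small cautions on your write-up: the parenthetical claim that $(\Z/p\Z)\Gamma$ is an \emph{Ore} domain is false for $r\ge 2$ (it contains a free subalgebra and fails the Ore condition), but this is harmless since you only need the absence of zero divisors, for which the Malcev--Neumann series embedding or the direct leading-term argument suffices; and for the leading-term argument you should use a \emph{bi}-invariant order (which free groups admit), since with a merely one-sided-invariant order the product of the two maximal support elements need not be the unique maximum of the product set without a further argument.
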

Therefore, \[
f(\Gamma\cc\phi_{h}((\Z/p\Z)^{\Gamma}))=f(\Gamma\cc(\Z/p\Z)^{\Gamma}).\]
 Thus $f(\Gamma\cc X_{h,p})=0$.

To prove Theorem \ref{thm:onto}, we need a little preparation.
\begin{defn}
Let $C_{r}$ be the Cayley graph of $\Gamma$. It has vertex set $\Gamma$
and edges $\{g,gs_{i}\}$ for all $g\in\Gamma$ and $1\le i\le r$.
Given a set $F\subset\Gamma$, the {\em induced subgraph} of $F$
is the subgraph $C_{r}(F)\subset C_{r}$ which has vertex set $F$
and contains every edge of $C_{r}$ which has both endpoints in $F$.
A subset $F\subset\Gamma$ is said to be {\em connected} if its
induced subgraph in $C_{r}$ is connected. The {\em convex hull}
of a set $F\subset\Gamma$ is the smallest connected set $F'\subset\Gamma$
with $F\subset F'$. An {\em extreme point} of $F$ is an element
$f\in F$ that has degree 1 in $C_{r}(F)$. We let $\Ex(F)$ denote
the set of extreme points of $F$. Note that if $F'$ is the convex
hull of $F$ then $\Ex(F')\subset F$. \end{defn}
\begin{lem}
\label{lem:order} Let $F=\{g\in\Gamma:~h(g^{-1})\ne p\Z\}$. Let
$\oF$ be the convex hull of $F$. Suppose there exists an ordering
$\gamma_{0},\gamma_{1},\gamma_{2},\ldots$ of $\Gamma$ such that
for every $n\ge1$ $\{\gamma_{0},\ldots,\gamma_{n}\}$ is connected
and \[
\gamma_{n}\oF\nsubseteq\cup_{i=0}^{n-1}\gamma_{i}\oF.\]
 Then $\phi_{h}$ is onto. \end{lem}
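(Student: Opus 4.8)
The plan is to reduce surjectivity of $\phi_h$ to surjectivity of finitely many explicit linear maps over $\Z/p\Z$, and to obtain the latter from a triangular structure that the hypothesised ordering forces.

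Since $\phi_h$ is a continuous endomorphism of the compact group $(\Z/p\Z)^{\Gamma}$, its image is closed, so it suffices to show that for every finite $E\subseteq\Gamma$ and every $y\in(\Z/p\Z)^{\Gamma}$ there is $x$ with $\phi_h(x)|_E=y|_E$; enlarging $E$ and extending $y$ arbitrarily, we may take $E=\{\gamma_0,\dots,\gamma_m\}$. Because $\phi_h(x)(\gamma_i)=\sum_{f}x(f)\,h\big((\gamma_i^{-1}f)^{-1}\big)$, and $h((\gamma_i^{-1}f)^{-1})\ne 0$ in $\Z/p\Z$ precisely when $f\in\gamma_iF$, it is enough to prove that the $\Z/p\Z$-linear map $\Phi\colon(\Z/p\Z)^{\bigcup_{i\le m}\gamma_iF}\to(\Z/p\Z)^{m+1}$ given by these formulas is onto (then extend a preimage by $0$ outside $\bigcup_i\gamma_iF$). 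It is convenient to note first that replacing $h$ by a left translate changes neither the image of $\phi_h$ (it merely relabels the argument of the functions) nor the hypotheses (it right-translates $\oF$, hence all the $\gamma_i\oF$, uniformly), so we may assume $e\in\support(h)$, i.e.\ $e\in F\subseteq\oF$.

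To show $\Phi$ is onto I would produce, for each $i=0,\dots,m$, a ``pivot'' $f_i^{\ast}\in\gamma_iF$ with $f_i^{\ast}\notin\gamma_jF$ for all $j<i$. Then the $f_i^{\ast}$ are distinct, row $i$ of $\Phi$ has an invertible entry in column $f_i^{\ast}$ and a zero entry in each column $f_j^{\ast}$ with $j>i$, so the $(m+1)\times(m+1)$ submatrix on the pivot columns is triangular with invertible diagonal; hence $\Phi$ is surjective. Since $\gamma_jF\subseteq\gamma_j\oF$, the existence of $f_i^{\ast}$ follows from the implication
\[
\gamma_i\oF\nsubseteq\bigcup\nolimits_{j<i}\gamma_j\oF\ \Longrightarrow\ \gamma_iF\nsubseteq\bigcup\nolimits_{j<i}\gamma_j\oF ,
\]
whose hypothesis is exactly what the ordering provides (for $i=0$ it reduces to $\gamma_0F\ne\emptyset$, true since $h\ne 0$).

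This implication is the heart of the matter, and it is the step I expect to be the main obstacle. I would argue the contrapositive: with $W:=\bigcup_{j<i}\gamma_j\oF$ and assuming $\gamma_iF\subseteq W$, show $\gamma_i\oF\subseteq W$. Let $w\in\gamma_i\oF$; since $\oF$ is the convex hull of $F$ in the tree $C_r$ (hence a union of geodesics between points of $F$), $w$ lies on the geodesic $[f_1,f_2]$ for some $f_1,f_2\in\gamma_iF\subseteq W$, say $f_1\in\gamma_j\oF$ and $f_2\in\gamma_k\oF$ with $j,k<i$. Let $m_j$ (resp.\ $m_k$) be the point of $[f_1,f_2]$ nearest to $\gamma_j$ (resp.\ $\gamma_k$). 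Convexity of $\gamma_j\oF$ gives $[f_1,m_j]=[f_1,\gamma_j]\cap[f_1,f_2]\subseteq\gamma_j\oF$, and likewise $[f_2,m_k]\subseteq\gamma_k\oF$. If $m_j=m_k$ then already $[f_1,f_2]=[f_1,m_j]\cup[m_j,f_2]\subseteq\gamma_j\oF\cup\gamma_k\oF\subseteq W$. If $m_j\ne m_k$, the non-backtracking concatenation $[\gamma_j,m_j]\cdot[m_j,m_k]\cdot[m_k,\gamma_k]$ shows $[m_j,m_k]\subseteq[\gamma_j,\gamma_k]$; since $\{\gamma_0,\dots,\gamma_{i-1}\}$ is connected and contains $\gamma_j,\gamma_k$, and $e\in\oF$ forces each $\gamma_\ell\in\gamma_\ell\oF$, this gives $[m_j,m_k]\subseteq\{\gamma_0,\dots,\gamma_{i-1}\}\subseteq W$, and splitting $[f_1,f_2]$ at $m_j$ and $m_k$ (a short case check on their order along $[f_1,f_2]$) yields $[f_1,f_2]\subseteq\gamma_j\oF\cup[m_j,m_k]\cup\gamma_k\oF\subseteq W$. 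In either case $w\in W$, completing the proof.
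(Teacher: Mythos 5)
Your overall skeleton is the same as the paper's (an induction that at each step $i$ locates a coordinate $f_i^\ast\in\gamma_iF$ on which no earlier equation depends, then solves triangularly), and your tree--geometric argument for producing the pivot is in fact more careful than the paper's, which finds an extreme point of $\gamma_{n+1}\oF$ outside $\cup_{i\le n}\gamma_i\oF$ by a one-line appeal to connectedness of that union. The genuine gap is the normalization ``we may assume $e\in F\subseteq\oF$.'' Replacing $h$ by a group-ring translate does preserve surjectivity of $\phi_h$, but the translate that merely relabels the argument of $x$ translates $F$ on the \emph{right}, and the Cayley graph $C_r$, with edge set $\{g,gs_i\}$, is invariant only under \emph{left} multiplication. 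Consequently the convex hull of $Fa$ is not $\oF a$ in general (already $F=\{e,s_1\}$ is connected while $Fs_2=\{s_2,s_1s_2\}$ is not), so the lemma's hypothesis, which refers to the convex hull of the new support, is not ``right-translated uniformly'' and is not preserved. The other translate, $h\mapsto hg$, does left-translate $F$ and hence $\oF$, but then the non-containment hypothesis transfers only to the ordering $\gamma_ig$, whose initial segments need not be connected. Either way the reduction to $e\in\oF$ is unjustified.

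This is not a removable convenience: your key implication genuinely needs $e\in\oF$. Take $\Gamma=\langle a,b\rangle$ and $F=\{ab,ab^{-1}\}$, so $\oF=\{ab,a,ab^{-1}\}$ and $e\notin\oF$. Let $\gamma_0,\dots,\gamma_6$ be $a,\,ab,\,ab^2,\,ab^2a^{-1},\,ab^{-1},\,ab^{-2},\,ab^{-2}a^{-1}$ and $\gamma_7=e$. Every initial segment is connected and one checks $\gamma_n\oF\nsubseteq\cup_{i<n}\gamma_i\oF$ for $1\le n\le 7$ (at $n=7$ because $a\in\gamma\oF$ forces $\gamma\in\{e,aba^{-1},ab^{-1}a^{-1}\}$, and no $\gamma_i$ with $i<7$ lies in that set). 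Yet $\gamma_7F=F\subseteq\gamma_3F\cup\gamma_6F$, since $ab^2a^{-1}\cdot ab^{-1}=ab$ and $ab^{-2}a^{-1}\cdot ab=ab^{-1}$; so at step $7$ the hypothesis $\gamma_i\oF\nsubseteq\cup_{j<i}\gamma_j\oF$ holds while no pivot exists, and both your implication and the triangular structure fail. The repair is to add the hypothesis that $e$ is a center of $\oF$ (hence $e\in\oF$, the center of a subtree lying in it): this costs nothing, because the paper invokes the lemma only after translating $h$ so that $e$ is a center of $\oF$, and the paper's own proof tacitly needs the same assumption to know that $\cup_{i\le n}\gamma_i\oF$ is connected. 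With ``$e\in\oF$'' assumed rather than ``arranged,'' your argument is complete and correct.
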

\begin{proof}
By compactness of $(\Z/p\Z)^{\Gamma}$ and continuity of $\phi_{h}$,
it suffices to show that for every $y\in(\Z/p\Z)^{\Gamma}$ and every
$n\ge0$, there exists an $x\in(\Z/p\Z)^{\Gamma}$ such that $\phi_{h}(x)(\gamma_{i})=y(\gamma_{i})$
for every $0\le i\le n$. We will prove this statement by induction
on $n$. It is clearly true for $n=0$. So suppose there is an $n\ge0$
for which the statement is true. Fix $y\in(\Z/p\Z)^{\Gamma}$ and
let $x\in(\Z/p\Z)^{\Gamma}$ be such that $\phi_{h}(x)(\gamma_{i})=y(\gamma_{i})$
for every $0\le i\le n$.

By hypothesis, $\gamma_{n+1}\oF\nsubseteq\cup_{i=0}^{n}\gamma_{n}\oF$.
Because $\cup_{i=0}^{n}\gamma_{n}\oF$ and $\gamma_{n+1}\oF$ are
connected and the convex hull of the exreme points set of a connected
set is the connected set itself , there must be an extremal point
$f\in\Ex(\oF)$ such that $\gamma_{n+1}f\notin\cup_{i=0}^{n}\gamma_{n}\oF$.
However, $\Ex(\oF)\subset F$. So $f\in F$. By definition, this means
that $h(f^{-1})\ne p\Z$. Because $p$ is prime, we may therefore
define an element $m\in\Z/p\Z$ by \[
m=h(f^{-1})^{-1}\left(y(\gamma_{n+1})-\sum_{g\in\Gamma\setminus\{f\}}x(\gamma_{n+1}g)h(g^{-1})\right).\]
 Define $x'\in(\Z/p\Z)^{\Gamma}$ by $x'(g)=x(g)$ if $g\ne\gamma_{n+1}f$
and $x'(\gamma_{n+1}f)=m$. Because $\gamma_{n+1}f\notin\cup_{i=0}^{n}\gamma_{n}\oF$,
it follows that $\phi_{h}(x')(\gamma_{i})=\phi_{h}(x)(\gamma_{i})$
for all $0\le i\le n$. Also a straightforward computation shows $\phi_{h}(x')(\gamma_{n+1})=y(\gamma_{n+1})$.
So $\phi_{h}(x')(\gamma_{i})=y(\gamma_{i})$ for all $0\le i\le n+1$.
This completes the inductive step and the claim. \end{proof}
\begin{defn}
Let $S=\{s_{1},\ldots,s_{r}\}$. For $g\in\Gamma$, let $|g|$ be
the smallest number $n\ge0$ such that there exist elements $t_{1},\ldots,t_{n}\in S\cup S^{-1}$
with $g=t_{1}\cdots t_{n}$. We also let $d(g_{1},g_{2})=|g_{1}^{-1}g_{2}|$
for any $g_{1},g_{2}\in\Gamma$. For $g\in\Gamma$ and $n\ge0$, let
$B(g,n)=\{k\in\Gamma:~d(k,g)\le n\}$ be the ball of radius $n$ centered
at $g$. %Equivalently, $F$ is connected if and only if for every $f,g \in F$ there exists a sequence $t_1,t_2,\ldots, t_n \in S \cup S^{-1}$ (where $S=\{s_1,\ldots,s_r\}$) such that $ft_1\cdots t_n=g$ and $ft_1 \cdots t_i \in F$ for $1 \le i \le n$. If $F$ is connected then an {\em extreme point} of $F$

Let $K\subset\Gamma$ be a finite set. The {\em radius} of $K$
is the smallest number $r\ge0$ such that there exists a $v\in\Gamma$
such that $B(v,r)\supset K$. An element $v\in\Gamma$ is called a
{\em center} of $K$ if $B(v,r)\supset K$ where $r$ is the radius
of $K$. For any $v,w\in\Gamma$, we let $[v,w]\subset\Gamma$ be
the set of all $g\in\Gamma$ such that the shortest path from $v$
to $w$ in the Cayley graph $C_{r}$ contains $g$. \end{defn}
\begin{lem}
Let $K$ be a connected finite set with radius $r\ge1$. Suppose the
identity element $e$ is a center of $K$. Then there exist elements
$v,w\in K$ such that $[e,v]\cap[e,w]=\{e\}$, $|v|=r$ and $|w|\in\{r-1,r\}$. \end{lem}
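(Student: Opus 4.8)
The plan is to use that the Cayley graph $C_r$ is a tree, so that the condition $[e,v]\cap[e,w]=\{e\}$ just says that the geodesics $[e,v]$ and $[e,w]$ leave $e$ along different edges. Concretely: if $\{e,a\}$ is the first edge of $[e,v]$ (so $a\in S\cup S^{-1}$, $d(e,a)=1$, $a\in[e,v]$), then $[e,v]\cap[e,w]=\{e\}$ is equivalent to $a\notin[e,w]$. Indeed, deleting the vertex $e$ from the tree $C_r$ breaks it into one component per neighbour of $e$, and a geodesic issuing from $e$ cannot re-enter the component it just left; hence $[e,v]\setminus\{e\}$ lies in the component of $a$, while $[e,w]\setminus\{e\}$ lies in a different component whenever $a\notin[e,w]$ (if $w=e$ the equivalence is trivial).

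First I would produce $v$. Since $e$ is a center of $K$ and $K$ has radius $r$, one cannot have $K\subseteq B(e,r-1)$ — that would make the radius at most $r-1$ — so there is $v\in K$ with $|v|=r$; this is where $r\ge 1$ first enters. Let $a$ be the unique neighbour of $e$ on the geodesic $[e,v]$.

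Next I would produce $w$ by applying the minimality of the radius to the vertex $a$: $B(a,r-1)\not\supseteq K$, so there is $z\in K$ with $d(a,z)\ge r$, and since $z\in K\subseteq B(e,r)$ we have $d(e,z)\le r$, whence $d(a,z)\le d(a,e)+d(e,z)\le r+1$. Now I invoke the tree dichotomy for the edge $\{e,a\}$ and the vertex $z$: either $a\in[e,z]$, in which case $d(e,z)=1+d(a,z)\ge r+1$, contradicting $d(e,z)\le r$; or else $e\in[a,z]$, in which case $a\notin[e,z]$ and $d(e,z)=d(a,z)-1\in\{r-1,r\}$. So the second alternative must hold; set $w=z$. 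Then $v,w\in K$, $|v|=r$, $|w|=d(e,z)\in\{r-1,r\}$, and $a\in[e,v]$ while $a\notin[e,w]$, so by the first paragraph $[e,v]\cap[e,w]=\{e\}$.

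The whole argument is essentially bookkeeping once the tree structure is in hand; the only point needing a moment's care is verifying that the two alternatives in the tree dichotomy are genuinely exhaustive, and checking that the degenerate case $r=1$ (where $|w|$ may be $0$, i.e.\ $w=e$) is harmless — it is, since then $[e,w]=\{e\}$ and the desired identity is immediate. I do not expect any real obstacle.
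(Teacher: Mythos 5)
Your proof is correct and is essentially the paper's argument run in the other direction: both hinge on the observation that $K\not\subseteq B(a,r-1)$ for the first vertex $a$ (the paper's $v_1$) of the geodesic $[e,v]$, and that any point of $K$ outside this ball must branch off from $e$ away from $a$ and have word length in $\{r-1,r\}$. The paper phrases this as a contradiction argument (assuming no such $w$ exists and deducing $K\subseteq B(v_1,r-1)$, contradicting minimality of the radius), whereas you extract $w$ directly, but the geometric content is identical.
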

\begin{proof}
Because $K$ has radius $r$ and center $e$, there is an element
$v$ with $|v|=r$. To obtain a contradiction, suppose that there
is no $w\in K$ with $|w|\in\{r-1,r\}$ and $[e,v]\cap[e,w]=\{e\}$.
Let $v_{1}\in S\cup S^{-1}$ be the unique element with $|v_{1}^{-1}v|=r-1$.
We claim that $B(v_{1},r-1)\supset K$. To see this, let $w\in K$.
If $|w|\le r-2$ then $w\in B(e,r-2)\subset B(v_{1},r-1)$. If $|w|>r-2$
then, because $K$ has center $e$ and radius $r$, $|w|\in\{r-1,r\}$.
By assumption, this implies $[e,v]\cap[e,w]\ne\{e\}$. So let $y\in[e,v]\cap[e,w]$
with $y\ne e$. Then $[e,y]\subset[e,v]$. This implies that $v_{1}\in[e,y]$.
In particular, $v_{1}\in[e,v]\cap[e,w]$, so $v_{1}\in[e,w]$. Because
$|w|\le r$, this implies $d(v_{1},w)\le r-1$ as claimed. So we have
shown that in all cases, if $w\in K$ then $w\in B(v_{1},r-1)$. This
shows that the radius of $K$ is at most $r-1$, a contradiction.
This contradiction proves the lemma. \end{proof}
\begin{lem}
Let $K$ be a connected finite set with radius $r\ge1$. Suppose the
identity element $e$ is a center of $K$. Suppose $g_{1},\ldots,g_{n}\in\Gamma\setminus\{e\}$
are elements with \[
K\subset\cup_{i=1}^{n}g_{n}K.\]
 Then $e$ is contained in the convex hull of $\{g_{1},\ldots,g_{n}\}$. \end{lem}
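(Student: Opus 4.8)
I would argue by contradiction, exploiting that the Cayley graph $C_r$ of $\Gamma$ is a tree. In a tree the convex hull of a finite vertex set $\{g_1,\ldots,g_n\}$ is the union of the geodesics $[g_i,g_j]$, and $e$ lies on $[g_i,g_j]$ exactly when $[e,g_i]\cap[e,g_j]=\{e\}$, i.e.\ exactly when the reduced words of $g_i$ and $g_j$ start with different letters. Since each $g_i\neq e$, the hypothesis $e\notin\Hull(\{g_1,\ldots,g_n\})$ is therefore equivalent to the assertion that there is a single letter $v_0\in S\cup S^{-1}$ such that the reduced word of every $g_i$ begins with $v_0$; equivalently, every $g_i$ lies in the branch $B_{v_0}$, the component of $C_r\setminus\{e\}$ containing $v_0$, and in particular $|g_i|\ge 1$. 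I would assume this and seek a contradiction.

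The only geometric input needed is the following: if $g\in B_{v_0}$ and $k\in\Gamma$ has reduced word not beginning with $v_0$ (the case $k=e$ being allowed, its empty word vacuously failing to begin with $v_0$), then $d(g,k)=|g|+|k|$, because the reduced word of $g^{-1}$ ends in $v_0^{-1}$ and no cancellation can occur against $k$. Using this I would first prove the claim $(\star)$: \emph{every $k\in K$ with $|k|=r$ has reduced word beginning with $v_0$}. Indeed, if $k\in K$, $|k|=r$, and the reduced word of $k$ did not begin with $v_0$, then $d(g_i,k)=|g_i|+r\ge 1+r>r$ for every $i$, so $g_i^{-1}k\notin B(e,r)\supseteq K$, hence $k\notin g_iK$ for all $i$, contradicting $k\in K\subseteq\bigcup_i g_iK$.

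Next I would invoke the preceding lemma to obtain $v,w\in K$ with $[e,v]\cap[e,w]=\{e\}$, $|v|=r$ and $|w|\in\{r-1,r\}$. By $(\star)$ the reduced word of $v$ begins with $v_0$; since $v\neq e$ and $[e,v]\cap[e,w]=\{e\}$, the reduced word of $w$ does not begin with $v_0$ (this also covers $w=e$). Applying $(\star)$ once more rules out $|w|=r$, so $|w|=r-1$. Pick $i$ with $g_i^{-1}w\in K$, so that $|g_i^{-1}w|\le r$; by the geometric input $|g_i^{-1}w|=|g_i|+(r-1)$, forcing $|g_i|=1$ and hence $g_i=v_0$ (as $g_i\in B_{v_0}$). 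Then $g_i^{-1}w=v_0^{-1}w$ is reduced with first letter $v_0^{-1}\neq v_0$ and length $1+(r-1)=r$, so $v_0^{-1}w\in K$ is an element of length $r$ whose reduced word does not begin with $v_0$ — contradicting $(\star)$. This finishes the proof.

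I expect the step requiring the most care to be the opening reduction: the identification of convex hulls in the tree $C_r$ and the characterization of when a geodesic passes through $e$. Once that is set up, everything reduces to an elementary non-cancellation computation in the free group together with the short case split $|w|\in\{r-1,r\}$ (the possibility $w=e$, which can only occur when $r=1$, is absorbed into the general argument). One should also record the harmless but necessary facts that $d(g,k)=|g^{-1}k|$, that $B(e,r)\supseteq K$ because $e$ is a center of $K$ of radius $r$, and that $v_0^{-1}\neq v_0$ in a free group.
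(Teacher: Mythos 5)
Your proof is correct, and it takes a genuinely different route from the paper's. You argue by contraposition: if $e$ is not in the convex hull, then (since the Cayley graph is a tree and each $g_i\ne e$) all the $g_i$ lie in a single branch $B_{v_0}$ at $e$, and your intermediate claim $(\star)$ — every $k\in K$ with $|k|=r$ must then also have reduced word beginning with $v_0$, because otherwise $|g_i^{-1}k|=|g_i|+|k|>r$ puts $g_i^{-1}k$ outside $B(e,r)\supseteq K$ — does most of the work; the residual case $|w|=r-1$ from the preceding lemma is then killed by forcing $g_i=v_0$ and exhibiting $v_0^{-1}w\in K$ of length $r$ starting with $v_0^{-1}\ne v_0$. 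The paper instead argues directly: it takes the same $v,w$ from the preceding lemma, chooses $g_i,g_j$ with $v\in g_iK$ and $w\in g_jK$, and uses explicit maximal-cancellation decompositions ($v=v_1v_2$, $x=x_1x_2$ with $v_2=x_2$ maximal, so $g_i=v_1x_1^{-1}$ with $|v_1|\ge 1$) to show $[e,g_i]$ meets $[e,v]$ beyond $e$, then runs a three-case analysis on $|w|$ and the cancellation in $g_j=wy^{-1}$ to conclude $e\in[g_i,g_j]$. Both arguments rest on the same non-cancellation computation, but your organization trades the paper's case analysis for the single claim $(\star)$, which is arguably cleaner; the cost is the preliminary tree facts you correctly flag (the hull is the union of pairwise geodesics, and $e\in[g_i,g_j]$ iff $[e,g_i]\cap[e,g_j]=\{e\}$), which the paper leaves implicit. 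Like the paper's own proof, yours reads the hypothesis $K\subset\cup_{i=1}^{n}g_{n}K$ as the evidently intended $K\subset\cup_{i=1}^{n}g_{i}K$.
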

\begin{proof}
Let $v,w\in K$ be elements such that $[e,v]\cap[e,w]=\{e\}$, $|v|=r$
and $|w|\in\{r-1,r\}$. Let $g_{i},g_{j}\in\{g_{1},\ldots,g_{n}\}$
be such that $v\in g_{i}K$ and $w\in g_{j}K$. Let $x,y\in K$ be
such that $v=g_{i}x$ and $w=g_{j}y$.

Let $v_{1},v_{2},x_{1},x_{2}\in\Gamma$ be such that $v=v_{1}v_{2},|v|=|v_{1}|+|v_{2}|$,
$x_{2}=v_{2}$, $x=x_{1}x_{2}$, $|x|=|x_{1}|+|x_{2}|$ and $|v_{2}|=|x_{2}|$
is as large as possible. Thus $g_{i}=vx^{-1}=v_{1}x_{1}^{-1}$ and
$|vx^{-1}|=|v_{1}|+|x_{1}|$. Because $r$ is the radius of $K$,
$e$ is a center and $x\in K$ we have $|x|\le r$. Also, we cannot
have $v=x$ (since this would imply $g_{i}=vx^{-1}=e$, a contradiction).
So we must have $|v_{1}|\ge1$. Thus $[e,v]\cap[e,g_{i}]\ne\{e\}$.
%Indeed, $[e,v_1] \subset [e,v] \cap [e,g_i]$.

Let $w_{1},w_{2},y_{1},y_{2}\in\Gamma$ be such that $w=w_{1}w_{2},|w|=|w_{1}|+|w_{2}|$,
$y_{2}=w_{2}$, $y=y_{1}y_{2}$, $|y|=|y_{1}|+|y_{2}|$ and $|w_{2}|=|y_{2}|$
is as large as possible. Thus $g_{j}=wy^{-1}=w_{1}y_{1}^{-1}$ and
$|wy^{-1}|=|w_{1}|+|y_{1}|$. Because $r$ is the radius of $K$,
$e$ is a center and $y\in K$ we have $|y|\le r$.

\textbf{Case 1}. If $|w|=r$, then, as in the previous paragraph,
we must have $[e,w]\cap[e,g_{j}]\ne\{e\}$. Because $[e,v]\cap[e,w]=\{e\}$,
this implies $e\in[g_{i},g_{j}]$ which implies the lemma.

%$[e,w_1] \subset [e,w] \cap [e,g_j]$ and $w_1 \ne e$. Because $[e,v]\cap [e,w] = \{e\}$, we have $[v_1,w_1] \ni e$. Also $[e,g_i] \cap [e,g_j] =\{e\}$ which implies
% and $[e,v] \cap [e,g_i] \ne \{e\}$, this implies $e \in [g_i,g_j]$ which implies the lemma.

\textbf{Case 2}. Suppose $|w|=r-1$ and $|w_{1}|\ge1$. Thus $[e,w]\cap[e,g_{j}]\ne\{e\}$.
Because $[e,v]\cap[e,w]=\{e\}$, this implies $e\in[g_{i},g_{j}]$
which implies the lemma.

\textbf{Case 3}. Suppose $|w|=r-1$ and $|w_{1}|=0$. Then $w=w_{2}$,
so $|w_{2}|=r-1$. Because $g_{j}=wy^{-1}=w_{1}y_{1}^{-1}=y_{1}^{-1}\ne e$,
we must $y_{1}\ne e$. Thus $|y|=|y_{1}|+|y_{2}|=|y_{1}|+|w_{2}|=|y_{1}|+r-1$.
Because $y\in K$ and $K$ has radius $r$ and center $e$, we must
have $|y_{1}|=1$ and $|y|=r$. If $[e,y]\cap[e,v]=\{e\}$ then, after
replacing $w$ with $y$ we are in the situation of Case 1 (note $y=g_{k}y'$
for some $1\leq k\leq n$ and $y'\in K$). So we may assume $[e,y]\cap[e,v]\ne\{e\}$
which implies $y_{1}\in[e,v]$. Because $g_{j}=y_{1}^{-1}$, and $[e,v]\cap[e,g_{i}]\ne\{e\}$,
we have $[e,g_{i}]\cap[e,g_{j}]=\{e\}$ which implies $e\in[g_{i},g_{j}]$
which implies the lemma.

{[}Proof of Theorem \ref{thm:onto}{]} Let $F=\{g\in\Gamma:~h(g^{-1})\ne p\Z\}$.
Let $\oF$ be the convex hull of $F$. For any $g\in\Gamma$, $\phi_{h}$
is onto if and only if $\phi_{gh}$ is onto. So after replacing $h$
with $gh$ for some $g\in\Gamma$, we may assume that $e$ is a center
of $\oF$.

Let $g_{0},g_{1},\ldots$ be an ordering of $\Gamma$ such that for
every $n\ge0$, $\{g_{0},\ldots,g_{n}\}$ is connected. We claim that
for every $n\ge1$, \[
\gamma_{n}\oF\nsubseteq\cup_{i=0}^{n-1}\gamma_{i}\oF.\]
 To obtain a contradiction, suppose that the claim is false for some
$n\ge1$. Then $\oF\subset\cup_{i=0}^{n-1}\gamma_{n}^{-1}\gamma_{i}\oF$,
$\gamma_{n}^{-1}\gamma_{i}\ne e$ for any $0\le i\le n-1$ and because
$\{\gamma_{0},\ldots,\gamma_{n-1}\}$ is connected, $\{\gamma_{n}^{-1}\gamma_{0},\ldots,\gamma_{n}^{-1}\gamma_{n-1}\}$
is connected which implies that $e$ is not in the convex hull of
$\{\gamma_{n}^{-1}\gamma_{0},\ldots,\gamma_{n}^{-1}\gamma_{n-1}\}$.
This contradicts the previous lemma. So we must have that for every
$n\ge1$, \[
\gamma_{n}\oF\nsubseteq\cup_{i=0}^{n-1}\gamma_{i}\oF.\]
 The theorem now follows from Lemma \ref{lem:order}.
\end{proof}
%\appendix
%dummy comment inserted by tex2lyx to ensure that this paragraph is not empty

%\bibliographystyle{alpha} \bibliographystyle{alpha}
%\bibliography{universal_bib}
%\bibliography{C:/Users/HP/Documents/Documents/Math/Bib/universal_bib}

\vspace{0.5cm}

\address{Lewis Bowen, Mathematics Department, Mailstop 3368, Texas A\&M University
College Station, TX 77843-3368 United States.}

\textit{E-mail address}: \texttt{lpbowen@math.tamu.edu}

\vspace{0.5cm}

\address{Yonatan Gutman, Institute of Mathematics, Polish Academy of Sciences,
ul. \mbox{\'{S}niadeckich 8}, 00-956 Warszawa, Poland.}

\textit{E-mail address}: \texttt{y.gutman@impan.pl}
\end{document}